\newtheorem{Theorem}{Theorem}[section]
\newtheorem{Problem}{Problem}[section]
\newtheorem{Lemma}{Lemma}[section]
\newenvironment {proof} {\noindent{\em Proof.}}{\hspace*{\fill}$\Box$\par\vspace{4mm}}
\begin{document}

\title{On spectral radius of strongly connected  digraphs}

\author{Jianping Li$^{a, b}$, Bo Zhou$^{ a}$\footnote{Corresponding author. e-mail: {\tt zhoubo@scnu.edu.cn}}
\\
$^a$Department of Mathematics,\\
South China Normal University,\\
Guangzhou 510631, P. R. China\\
$^b$Faculty of Applied
Mathematics,\\
Guangdong University of Technology,\\
Guangzhou 510090, P. R. China 
}
\date{}
\maketitle

\begin{abstract}
We determine  the digraphs which achieve the second, the third
and the fourth  minimum spectral radii respectively among  strongly connected
digraphs of order $n\ge 4$, and thus we answer affirmatively the problem whether 
the unique digraph which
achieves the minimum spectral radius among all strongly connected
bicyclic digraphs of order $n$ achieves the second minimum spectral
radius among all strongly connected digraphs of order $n$ for $n\ge 4$ proposed in 
  [H. Lin, J. Shu, A note on the spectral characterization
of strongly connected bicyclic digraphs, Linear Algebra Appl. 436
(2012) 2524--2530]. We also discuss the strongly connected bicyclic
digraphs  with small and large spectral radii respectively.\\ \\
{\bf Keywords:} spectral radius,  strongly connected digraph, bicyclic digraph, nonnegative irreducible matrix\\ \\
{\bf AMS subject classifications:} 05C50, 15A18
\end{abstract}

\section{ Introduction}

We consider digraphs without loops and multiple arcs.  Let $D$ be
 a digraph of order $n$ with vertex set $V(D)$ and arc set
$E(D)$. Let $V(D)=\{v_1, v_2,\dots, v_n\}$. The adjacency matrix of
$D$ is the $(0, 1)$-matrix $A(D)=(a_{ij})$ of order $n$ where
$a_{ij}=1$ if there is an arc from $v_i$ to $v_j$, and $a_{ij}=0$
otherwise. The eigenvalues of $D$ are the eigenvalues of $A(D)$. The
spectral radius of $D$ is  the largest modulus of an eigenvalue of
$D$, denoted by $\rho(D)$. Obviously, the eigenvalues of $D$ are the roots of the characteristic polynomial
of  $D$, denoted by $P(D, x)$, defined to be the characteristic
polynomial of the matrix $A(D)$, which is $\det(x I_n-A(D))$, where $I_n$ is the identity matrix of order $n$.
The spectra of digraphs have been studied to some extent, see e.g.,~\cite{R,EF,HJ,LS2}.

%

A digraph $D$ is strongly connected if for every pair $x,y \in
V(D)$, there exists a directed path from $x$ to $y$ and a directed
path from $y$ to $x$.  $D$ is called a strongly connected bicyclic
digraph if $D$ is strongly connected with $|E(D)|=|V(D)|+1$. For $n\ge 3$, let $\mathbb{B}_n$ be the set of strongly connected
bicyclic digraphs of order $n$.

Note that $D$
is strongly connected if and only if $A(D)$ is irreducible. It follows
from the Perron-Frobenius Theorem  that if $D$ is strongly connected, then $\rho(D)$ is an eigenvalue of $D$ and  there is a corresponding eigenvector whose coordinates are all positive.

Let $P_n$ be a directed path of order $n$. If $P_n=u_1u_2\dots u_n$,
then $u_1$ is the initial vertex, and $u_n$ is the terminal vertex
of $P_n$. The  $\theta$-digraph with parameters $a$, $b$ and $c$ with $a\le
b$,  denoted by $\theta(a, b, c)$,  consists of three directed paths
$P_{a+2}$, $P_{b+2}$ and $P_{c+2}$ such that the initial vertex of
$P_{a+2}$ and $P_{b+2}$ is the terminal vertex of $P_{c+2}$, and the
initial vertex of $P_{c+2}$ is the terminal vertex of $P_{a+2}$ and
$P_{b+2}$. These three directed paths are called the basic directed paths of $\theta(a, b, c)$. A $\infty$-digraph  with parameters $k$ and $l$ with
$k\le l$, denoted by $\infty(k, l)$, consists of two directed cycles
with exactly a vertex in common.

Recently, Lin and Shu \cite{HJ} showed that $\theta(0, 1, n -3)$ ($\infty(2,
n-1)$, respectively) is  the unique digraph in $\mathbb{B}_n$
which achieve the  minimum (maximum, respectively) spectral radius for $n\ge 4$.
Let $C_n$ be the directed cycle of order $n$. Note that $C_n$ uniquely achieves the minimum spectral
radius among all strongly connected digraphs of order $n\ge 3$.  Lin and Shu \cite{HJ} proposed the
following problem.
\begin{Problem} \label{p1}
Is  $\theta(0, 1, n-3)$ achieving the second minimum spectral
radius among all $n$-vertex strongly connected digraphs for $n\ge 4$?
\end{Problem}

Now we determine  the unique  digraphs which achieve the
second, the third and the fourth  minimum spectral radii
respectively  among strongly connected  digraphs of order $n\ge 4$, and thus
we answer Problem \ref{p1} affirmatively. To obtain this, we also determine the unique  digraphs in $\mathbb{B}_n$
with the
second and the third minimum spectral radii respectively  for $n\ge 4$. Finally, we determine the unique digraph in $\mathbb{B}_n$ with the second maximum spectral
radius for $n\ge 4$.

\section{Preliminaries}

We list some lemmas that will be used in our proof.

\begin{Lemma}\label{l2}\cite{HJ}
For $n\ge 4$, 
$\theta(0, 1, n-3)$ is the unique digraph in $\mathbb{B}_n$ which achieves the minimum
spectral radius, and  $\infty\left(\left\lfloor
\frac{n+1}{2}\right\rfloor,
\left\lceil\frac{n+1}{2}\right\rceil\right)$ is the unique $\infty$-digraph in $\mathbb{B}_n$
which achieves the minimum spectral radius.
\end{Lemma}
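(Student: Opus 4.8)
The plan is to reduce the whole statement to elementary facts about characteristic polynomials, which for bicyclic digraphs are just trinomials. First I would pin down the members of $\mathbb{B}_n$. In a strongly connected digraph every vertex has out-degree and in-degree at least $1$, and for $D\in\mathbb{B}_n$ we have $\sum_{v\in V(D)}d^{+}(v)=\sum_{v\in V(D)}d^{-}(v)=|E(D)|=n+1$; hence exactly one vertex has out-degree $2$, exactly one has in-degree $2$, and every other vertex has out-degree and in-degree equal to $1$. Starting from the out-degree-$2$ vertex and following out-arcs through vertices of out-degree $1$, a short case analysis---according to whether the out-degree-$2$ and in-degree-$2$ vertices coincide---shows that $D$ is either a $\theta$-digraph $\theta(a,b,c)$ with $a+b+c=n-2$, $0\le a\le b$ and $(a,b)\ne(0,0)$, or an $\infty$-digraph $\infty(k,l)$ with $k+l=n+1$ and $2\le k\le l$.

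Next I would compute $P(D,x)$ via the coefficient theorem for digraphs: the coefficient of $x^{n-i}$ in $\det(xI_n-A(D))$ equals $\sum_{L}(-1)^{c(L)}$, summed over the spanning subdigraphs $L$ on $i$ vertices that are vertex-disjoint unions of directed cycles, $c(L)$ being the number of cycles of $L$. A $\theta$-digraph and an $\infty$-digraph each have exactly two directed cycles, and any two of their directed cycles share a vertex; so the only relevant subdigraphs are the empty one and the two single cycles. Reading off the cycle lengths and simplifying the exponents by $a+b+c=n-2$ (resp.\ $k+l=n+1$) gives
\[
P(\theta(a,b,c),x)=x^{n}-x^{a}-x^{b},\qquad
P(\infty(k,l),x)=x^{n}-x^{k-1}-x^{l-1}.
\]

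It then remains to analyse trinomials $p(x)=x^{n}-x^{s}-x^{t}$ with $0\le s,t$ and $s+t<n$ (which holds in both families). Any such $p$ satisfies $p(1)=-1<0$ and, for $x\ge 1$, $p'(x)\ge(n-s-t)x^{n-1}>0$; hence $p$ has exactly one root in $(1,\infty)$, which by Perron--Frobenius is $\rho(D)$, and therefore $P(D,x)>P(D',x)$ for all $x>1$ implies $\rho(D)<\rho(D')$. Three comparisons now finish the proof. (i) Within the $\theta$-family, $x^{a}+x^{b}\ge x^{0}+x^{1}$ for every $x>1$, with equality iff $(a,b)=(0,1)$ (treat $a=0$ and $a\ge1$ separately), so $\theta(0,1,n-3)$ is the unique $\theta$-digraph of minimum spectral radius. (ii) Within the $\infty$-family, fix $x>1$; then $x^{k-1}+x^{l-1}=x^{k-1}+x^{n-k}$ is strictly decreasing in $k$ on $2\le k\le\lfloor\frac{n+1}{2}\rfloor$, since the consecutive difference is $(x-1)\bigl(x^{k-1}-x^{n-k-1}\bigr)<0$ there; hence it is minimized exactly at the balanced split, so $\infty\!\left(\left\lfloor\frac{n+1}{2}\right\rfloor,\left\lceil\frac{n+1}{2}\right\rceil\right)$ is the unique $\infty$-digraph of minimum spectral radius. (iii) Comparing the families, $P(\theta(0,1,n-3),x)-P(\infty(k,l),x)=x^{k-1}+x^{l-1}-x-1$, which for $x>1$ equals $x^{n-2}-1>0$ when $k=2$ and is at least $2x^{2}-x-1=(2x+1)(x-1)>0$ when $k\ge3$; so $\theta(0,1,n-3)$ has strictly smaller spectral radius than every $\infty$-digraph in $\mathbb{B}_n$.

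The only steps that are not purely mechanical are the structural classification of $\mathbb{B}_n$ and the verification that neither a $\theta$- nor an $\infty$-digraph contains two vertex-disjoint directed cycles (this is exactly what keeps the characteristic polynomials trinomial). Once those two formulas are established, the monotonicity observation and the three comparisons are routine, and the uniqueness assertions follow from the strictness of the inequalities; the remaining care is just with the boundary constraints, e.g.\ $(a,b)\ne(0,0)$, $k\ge2$, and $n\ge4$ (so that $c=n-3\ge1$ and the balanced $\infty$-split is legitimate).
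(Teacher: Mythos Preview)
The paper does not prove this lemma; it is quoted from \cite{HJ} without argument. So there is no in-paper proof to compare against. That said, your proposal is correct and self-contained. Your characteristic polynomial formulas are exactly what the paper records separately as Lemma~\ref{poly}, and your classification of $\mathbb{B}_n$ into $\theta$- and $\infty$-digraphs is the standard one underlying \cite{HJ}. The three comparisons you make are clean: the key observation that $p(x)=x^n-x^s-x^t$ with $s+t<n$ is strictly increasing on $[1,\infty)$ and has $p(1)=-1$ reduces everything to elementary inequalities between $x^a+x^b$ and $x^0+x^1$, between $x^{k-1}+x^{n-k}$ at consecutive values of $k$, and across the two families. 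One cosmetic remark: in step~(ii) your phrase ``strictly decreasing in $k$ on $2\le k\le\lfloor\tfrac{n+1}{2}\rfloor$'' should be read as the consecutive difference $f(k{+}1)-f(k)=(x-1)(x^{k-1}-x^{n-k-1})$ being negative for $k\le\lfloor\tfrac{n+1}{2}\rfloor-1$, which is exactly the range needed to conclude that the minimum on the admissible domain $2\le k\le\lfloor\tfrac{n+1}{2}\rfloor$ is attained uniquely at the right endpoint; you may want to state the index range for the difference explicitly. With that adjustment the argument is complete and the uniqueness claims follow from the strictness of all the inequalities involved.
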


The following lemma was proved in \cite{HJ} for $c\ge 1$. By proof
there, it is also true for $c=0$.

\begin{Lemma} \label{add}\cite{HJ}
%
If $b\ge 1$, then 
$\rho(\theta(a,b,c))> \rho(\theta(a,b-1,c+1))$.
If $a\ge 1$, then 
$\rho(\theta(a,b,c))>\rho(\theta(a-1,b,c+1))$.
\end{Lemma}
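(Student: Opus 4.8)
The plan is to reduce both inequalities to a comparison of the largest positive roots of two explicit polynomials of the same degree. First I would compute the characteristic polynomial of $\theta(a,b,c)$ from the permutation expansion of $\det(xI_n-A)$: grouping the permutations by their set of non-fixed points, each nonzero term corresponds to a collection $L$ of vertex-disjoint directed cycles in the digraph (a linear subdigraph) and contributes $(-1)^{p(L)}x^{n-|V(L)|}$, where $p(L)$ is the number of cycles in $L$ and $|V(L)|$ the number of vertices they cover. In $\theta(a,b,c)$, writing $u$ for the common initial vertex of $P_{a+2}$ and $P_{b+2}$ and $v$ for their common terminal vertex, every directed cycle must pass through $u$ and $v$, and a short inspection of the three basic paths shows the only directed cycles are $C'$ of length $a+c+2$ ($P_{a+2}$ followed by $P_{c+2}$) and $C''$ of length $b+c+2$ ($P_{b+2}$ followed by $P_{c+2}$); these share the whole path $P_{c+2}$, hence are not vertex-disjoint. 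So the only linear subdigraphs are $\emptyset$, $\{C'\}$ and $\{C''\}$, and with $n=a+b+c+2$,
\[
P(\theta(a,b,c),x)=x^n-x^{\,n-(a+c+2)}-x^{\,n-(b+c+2)}=x^n-x^{b}-x^{a}.
\]
The same computation gives $P(\theta(a,b-1,c+1),x)=x^n-x^{b-1}-x^{a}$ and $P(\theta(a-1,b,c+1),x)=x^n-x^{b}-x^{a-1}$, and all three digraphs have order $n$.

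Second, I would record the analytic facts I need about $f(x):=x^n-x^b-x^a$ and $g(x):=x^n-x^{b-1}-x^a$. For any strongly connected digraph $D$ one has $P(D,x)>0$ for every $x>\rho(D)$ (the real eigenvalues are at most $\rho(D)$, and each conjugate pair of non-real eigenvalues contributes a factor $|x-\lambda|^2>0$), while $P(D,\rho(D))=0$. By Descartes' rule of signs each of $f$ and $g$ has exactly one positive root; since the Perron root is a positive root of the characteristic polynomial, these are precisely $\rho_1:=\rho(\theta(a,b,c))$ and $\rho_2:=\rho(\theta(a,b-1,c+1))$. From $f(x)=x^a(x^{n-a}-x^{b-a}-1)<0$ for $0<x\le 1$ we get $\rho_1>1$; and since $g$ is negative near $0^+$ and tends to $+\infty$, its unique positive root $\rho_2$ is simple, with $g<0$ on $(0,\rho_2)$ and $g>0$ on $(\rho_2,\infty)$.

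Third is the comparison itself. Evaluating $g$ at $\rho_1$ and using $f(\rho_1)=0$, i.e.\ $\rho_1^n-\rho_1^a=\rho_1^b$, gives
\[
g(\rho_1)=\rho_1^n-\rho_1^{b-1}-\rho_1^a=\rho_1^{b}-\rho_1^{b-1}=\rho_1^{b-1}(\rho_1-1)>0
\]
since $\rho_1>1$ and $b\ge 1$. As $g>0$ only on $(\rho_2,\infty)$, this forces $\rho_1>\rho_2$, the first inequality. The second is entirely analogous: with $g_2(x):=x^n-x^b-x^{a-1}=P(\theta(a-1,b,c+1),x)$ and $\rho_1^n-\rho_1^b=\rho_1^a$ one gets $g_2(\rho_1)=\rho_1^{a-1}(\rho_1-1)>0$ (using $a\ge 1$), whence $\rho_1>\rho(\theta(a-1,b,c+1))$. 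I expect the only delicate point to be the sign analysis in the second step: one must be sure that positivity of $g$ at $\rho_1$ genuinely places $\rho_1$ above the \emph{largest} root of $g$, not in a spurious positive region below it, which is exactly why Descartes' rule (or, alternatively, $g(1)<0$ together with $g'>0$ on $[1,\infty)$) is invoked. Note that all of this is valid for every $c\ge 0$, so in particular it covers the case $c=0$ referred to in the remark above.
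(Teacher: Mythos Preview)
Your proof is correct. The paper does not actually supply a proof of this lemma: it cites \cite{HJ} for the case $c\ge 1$ and remarks that the same argument covers $c=0$. Your approach---computing $P(\theta(a,b,c),x)=x^n-x^b-x^a$ via linear subdigraphs and then evaluating one characteristic polynomial at the Perron root of the other---is precisely the technique used in \cite{HJ} and throughout the present paper (compare Lemmas~\ref{addd}, \ref{l5}, \ref{ll2} and the proofs of Theorems~\ref{th1} and~\ref{th2}), so your argument is in the same spirit as the cited one, just made fully explicit and self-contained. Your care with Descartes' rule (ensuring $g(\rho_1)>0$ really places $\rho_1$ above the unique positive root of $g$) is appropriate and correct, and your observation that nothing in the computation requires $c\ge 1$ confirms the paper's parenthetical remark.
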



The Following lemma was given in plain text in \cite{HJ} and it is a consequence of the well known coefficients theorem for digraphs, see e.g.,  \cite[Theorem 1.2, p.~36]{CDS}.

\begin{Lemma}  \label{poly} 
$P(\theta(a,b,c),x)=x^n-x^a-x^b$ with $n=a+b+c+2$, and $P(\infty(k,l),
x)=x^n-x^{k-1}-x^{l-1}$ with $n=k+l-1$.
\end{Lemma}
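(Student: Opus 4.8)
The plan is to apply the coefficients theorem for digraphs, i.e. the cited \cite[Theorem 1.2, p.~36]{CDS}, which writes the characteristic polynomial of a digraph $D$ on $n$ vertices as $P(D,x)=x^n+\sum_{i=1}^{n}c_i x^{n-i}$, where $c_i=\sum_{L}(-1)^{p(L)}$ and the sum runs over all \emph{linear subdigraphs} $L$ of $D$ on exactly $i$ vertices — that is, subdigraphs in which every vertex has in-degree and out-degree $1$, equivalently vertex-disjoint unions of directed cycles — with $p(L)$ the number of cycles (components) of $L$. (As a sanity check, for $C_n$ the only nonempty linear subdigraph is the cycle itself, with $p=1$, giving $P(C_n,x)=x^n-1$.) So the whole computation reduces to enumerating the directed cycles of $\theta(a,b,c)$ and of $\infty(k,l)$ and checking which of them can be chosen pairwise vertex-disjoint.

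For $\theta(a,b,c)$ I would first read off from the definition of the three basic directed paths $P_{a+2}$, $P_{b+2}$, $P_{c+2}$ and the identification of their initial/terminal vertices that the digraph has exactly two directed cycles: the one obtained by following $P_{a+2}$ and then $P_{c+2}$, of length $a+c+2$, and the one obtained by following $P_{b+2}$ and then $P_{c+2}$, of length $b+c+2$. No directed cycle uses $P_{a+2}$ and $P_{b+2}$ together, since these two paths share both their initial vertex and their terminal vertex, so neither can serve as the return leg of the other. Both cycles contain all $c+2\ge 2$ vertices of $P_{c+2}$, so they are not vertex-disjoint; hence no linear subdigraph has two or more components, and the only nonempty linear subdigraphs are these two single cycles. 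Therefore $c_{a+c+2}$ and $c_{b+c+2}$ each receive the contribution $-1$ and all other $c_i$ with $i\ge 1$ vanish; since $n-(a+c+2)=b$ and $n-(b+c+2)=a$ with $n=a+b+c+2$, this gives $P(\theta(a,b,c),x)=x^n-x^a-x^b$. (When $a=b$ the two cycles coincide in length, the coefficient is $-2$, and the formula is read as $x^n-2x^a$, which is what $x^n-x^a-x^b$ means in that case.)

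For $\infty(k,l)$ the same scheme applies: by definition this digraph is the union of two directed cycles of lengths $k$ and $l$ having exactly one vertex in common, so those are its only directed cycles and they are not vertex-disjoint; hence the only nonempty linear subdigraphs are these two cycles, and $P(\infty(k,l),x)=x^n-x^{n-k}-x^{n-l}=x^n-x^{k-1}-x^{l-1}$ with $n=k+l-1$, again with the obvious reading if $k=l$.

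The one point requiring genuine (though wholly elementary) care — and the crux of the argument — is the structural claim that $\theta(a,b,c)$ and $\infty(k,l)$ have exactly the directed cycles listed above and that no two of them are vertex-disjoint; I would establish this directly from the definitions, emphasizing in the $\theta$-case that the prescribed orientation forces the two "forward branches" $P_{a+2}$ and $P_{b+2}$ never to lie on a common directed cycle. Everything else is routine bookkeeping with the coefficients theorem.
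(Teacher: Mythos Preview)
Your proposal is correct and follows exactly the approach the paper indicates: the paper does not give a detailed proof but simply notes that the lemma is a consequence of the coefficients theorem for digraphs \cite[Theorem~1.2, p.~36]{CDS}, which is precisely what you apply and spell out in detail. Your enumeration of the directed cycles and the observation that no two are vertex-disjoint is the routine verification the paper leaves implicit.
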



\begin{Lemma} \label{addd} For $n\ge 4$,
$\rho(\infty(\lfloor \frac{n+1}{2}\rfloor,
\lceil\frac{n+1}{2}\rceil))>\rho(\theta(0, 2, n-4))$.
\end{Lemma}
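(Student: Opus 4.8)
The plan is to compare the two spectral radii by substituting the spectral radius of one digraph into the characteristic polynomial of the other. Set $\rho_1=\rho(\theta(0,2,n-4))$ and let $k=\lfloor\frac{n+1}{2}\rfloor$, $l=\lceil\frac{n+1}{2}\rceil$, so that $k+l-1=n$ and the claim to be proved is $\rho(\infty(k,l))>\rho_1$. Since $0+2+(n-4)+2=n$, Lemma~\ref{poly} gives $P(\theta(0,2,n-4),x)=x^n-x^2-1$, so $\rho_1$ satisfies $\rho_1^{\,n}=\rho_1^{\,2}+1$; and since $\theta(0,2,n-4)\in\mathbb{B}_n$ is not a directed cycle, $\rho_1>\rho(C_n)=1$.

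First I would record the elementary observation that $P(\infty(k,l),x)=x^n-x^{k-1}-x^{l-1}$ is strictly positive for every real $x>\rho(\infty(k,l))$: by the Perron--Frobenius theorem $\rho(\infty(k,l))$ is an eigenvalue and has modulus at least that of every other eigenvalue, so it is the largest real root of this monic polynomial, which is therefore positive to its right. (Alternatively, write the polynomial as $x^{k-1}(x^{n-k+1}-x^{l-k}-1)$ and differentiate: the second factor is increasing on $[1,\infty)$, which yields the same conclusion.) Consequently it suffices to prove that $P(\infty(k,l),\rho_1)<0$.

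Next I would evaluate. Writing $p=k-1=\lfloor\frac{n-1}{2}\rfloor$ and $q=l-1=\lceil\frac{n-1}{2}\rceil$, and using $\rho_1^{\,n}=\rho_1^{\,2}+1$,
\[
P(\infty(k,l),\rho_1)=\rho_1^{\,n}-\rho_1^{\,p}-\rho_1^{\,q}=\rho_1^{\,2}+1-\rho_1^{\,p}-\rho_1^{\,q}.
\]
For $n\ge 4$ one has $q\ge 2$ and $p\ge 1$, and $\rho_1>1$, so $\rho_1^{\,q}\ge\rho_1^{\,2}$ and $\rho_1^{\,p}\ge\rho_1$; hence $P(\infty(k,l),\rho_1)\le 1-\rho_1<0$, and the lemma follows.

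I do not anticipate a serious obstacle: the only points that need care are the parameter bookkeeping ($a+b+c+2=n$ for the $\theta$-digraph and $k+l-1=n$ for the $\infty$-digraph), the identification of the exponents $k-1,l-1$ with $\lfloor\frac{n-1}{2}\rfloor,\lceil\frac{n-1}{2}\rceil$, and the justification that $P(\infty(k,l),\cdot)$ stays positive beyond the spectral radius. If one prefers to avoid that last step, one can instead observe that $\rho(\infty(k,l))$ is the unique root of $x^n-x^{k-1}-x^{l-1}$ in $(1,\infty)$ and argue directly from the sign of this polynomial on $(1,\infty)$; but the computation above is the cleanest route.
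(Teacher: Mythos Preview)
Your proof is correct and follows essentially the same approach as the paper: both arguments compare the two characteristic polynomials from Lemma~\ref{poly} and use the sign of one at (or beyond) the spectral radius of the other. The only cosmetic differences are that you substitute $\rho_1=\rho(\theta(0,2,n-4))$ into $P(\infty(k,l),\cdot)$ (using the relation $\rho_1^{\,n}=\rho_1^{\,2}+1$), whereas the paper shows $P(D_1,x)-P(D_2,x)>0$ for $x\ge\rho(D_2)$, and that your uniform bound $\rho_1^{\,q}\ge\rho_1^{\,2}$, $\rho_1^{\,p}\ge\rho_1$ replaces the paper's separate even/odd case analysis.
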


\begin{proof} Let $D_1=\theta(0, 2, n-4)$ and $D_2=\infty(\lfloor \frac{n+1}{2}\rfloor,
\lceil\frac{n+1}{2}\rceil)$. By Lemma \ref{poly},
$P(D_1,x)=x^n-x^2-1$ and  $P(D_2,x)=x^n-x^{\lfloor
\frac{n-1}{2}\rfloor}-x^{\lceil\frac{n-1}{2}\rceil}$. For $x\ge
\rho(D_2)>1$, $P(D_1,x)-P(D_2,x)=-x^2-1+2x^{\frac{n-1}{2}}\geq
-x^2-1+2x^2=x^2-1>0$ if $n$ is odd  and
$P(D_1,x)-P(D_2,x)=-x^2-1+x^{\frac{n}{2}-1}+x^{\frac{n}{2}}\geq
-x^2-1+x+x^2=x-1>0$  if $n$ is even.  Thus $\rho(D_2)>\rho(D_1)$.
\end{proof}

\begin{Lemma}\label{l5}  $\rho(\theta(0, 2, n-4))$
is strictly decreasing  for $n\ge 4$.
\end{Lemma}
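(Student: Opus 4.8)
By Lemma~\ref{poly} we have $P(\theta(0,2,m-4),x)=x^m-x^2-1$ for each $m\ge 4$. The claim is that the largest real root $\rho_m:=\rho(\theta(0,2,m-4))$ of $f_m(x):=x^m-x^2-1$ decreases as $m$ increases. Since $\theta(0,2,m-4)$ is strongly connected, $\rho_m>1$ (indeed $\rho_m$ is the Perron root, a simple positive root, and it is the unique root of $f_m$ in $(1,\infty)$ because $f_m$ is increasing there once $x^{m-2}>2x/(m)$-type estimates kick in — more cleanly, $f_m(1)=-1<0$ and $f_m(x)\to\infty$, and on $(1,\infty)$, $f_m'(x)=x(mx^{m-2}-2)>0$ for $x>1$ when $m\ge 3$, so there is exactly one root $>1$).

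First I would fix $m\ge 4$ and compare $f_{m+1}$ with $f_m$ at the point $x=\rho_m$. We have
\[
f_{m+1}(\rho_m)-f_m(\rho_m)=\rho_m^{m+1}-\rho_m^{m}=\rho_m^{m}(\rho_m-1)>0,
\]
since $\rho_m>1$. Because $f_m(\rho_m)=0$, this gives $f_{m+1}(\rho_m)>0$. Now $f_{m+1}$ is negative at $x=1$ and has its unique root $\rho_{m+1}$ in $(1,\infty)$, with $f_{m+1}<0$ on $(1,\rho_{m+1})$ and $f_{m+1}>0$ on $(\rho_{m+1},\infty)$ (by the same monotonicity of $f_{m+1}'$ on $(1,\infty)$). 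From $f_{m+1}(\rho_m)>0$ and $\rho_m>1$ we conclude $\rho_m>\rho_{m+1}$, which is exactly the desired strict monotonicity.

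The only things needing care are the two "soft" facts I used: that $\rho_m>1$, and that $f_m$ has exactly one root in $(1,\infty)$ with the stated sign pattern. The first follows from the Perron–Frobenius remark in the introduction (a strongly connected digraph on $\ge 3$ vertices that is not a single directed cycle has spectral radius $>1$; alternatively $f_m(1)=-1<0$ and $f_m$ is eventually positive). The second follows from $f_m'(x)=x\left(mx^{m-2}-2\right)$, which is positive for all $x>1$ once $m\ge 2$, so $f_m$ is strictly increasing on $[1,\infty)$; combined with $f_m(1)<0$ this pins down the unique root $\rho_m>1$ and the sign pattern of $f_{m+1}$ that the comparison argument invokes. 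I would state these as a one-line preliminary observation and then give the three-line comparison. I do not expect any real obstacle here; the computation $f_{m+1}(\rho_m)=\rho_m^m(\rho_m-1)>0$ is the whole content, and the monotonicity of $f_m'$ on $(1,\infty)$ supplies the rest. If one prefers to avoid even discussing $f_m'$, one can instead argue directly: $\rho_{m+1}^{m+1}=\rho_{m+1}^2+1$ and $\rho_m^m=\rho_m^2+1$, so $\rho_{m+1}^{m+1}-\rho_m^{m}=\rho_{m+1}^2-\rho_m^2$; if $\rho_{m+1}\ge \rho_m$ then the left side is $\ge \rho_m^{m+1}-\rho_m^m=\rho_m^m(\rho_m-1)>0$ forces $\rho_{m+1}>\rho_m$, and then $\rho_{m+1}^{m+1}-\rho_m^{m+1}>0$ while $\rho_{m+1}^2-\rho_m^2$ would have to account for a gap that also includes $\rho_m^{m+1}-\rho_m^m>0$, a contradiction once one notes $t\mapsto t^{m+1}$ grows faster than $t\mapsto t^2$ for $t>1$. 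The first (sign-of-$f_{m+1}$ at $\rho_m$) phrasing is cleanest and is what I would write up.
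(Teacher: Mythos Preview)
Your proof is correct and follows essentially the same approach as the paper: compare the characteristic polynomials $x^m-x^2-1$ at the Perron root of the smaller-index digraph, using $\rho_m>1$, to force the larger-index root to lie strictly below. The paper does this in one line for arbitrary $n_1>n_2\ge 4$ (computing $P(\theta(0,2,n_1-4),x)-P(\theta(0,2,n_2-4),x)=x^{n_1}-x^{n_2}>0$ for $x\ge \rho(\theta(0,2,n_2-4))>1$), whereas you treat consecutive indices and add the explicit monotonicity of $f_m$ on $(1,\infty)$; these are cosmetic differences only.
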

\begin{proof}
Suppose that $n_1>n_2\ge 4$.  By Lemma \ref{poly},   $P(\theta(0,2,
n_1-4),x)-P(\theta(0, 2, n_2-4),x)= x^{n_1}-x^{n_2}>0$ for $x\ge
\rho(\theta(0, 2, n_2-4))>1$. Thus $\rho(\theta(0, 2,
n_1-4))<\rho(\theta(0, 2, n_2-4))$.
\end{proof}


Recall that the spectral radius of a nonnegative irreducible matrix $B$ is larger than that of a principal submatrix of $B$ and it increases when an entry of $B$ increases \cite[p.~19, 38]{H}. Thus we have the following well known lemma.

\begin{Lemma} \label{ad}
Let $D$ be a strongly connected digraph and $H$ a strongly connected
proper subdigraph of $D$. Then
$\rho(D)>\rho(H)$.
\end{Lemma}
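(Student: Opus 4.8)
The plan is to reduce the statement to the two classical facts recalled just above it: for a nonnegative irreducible matrix $B$, the spectral radius of any proper principal submatrix of $B$ is strictly smaller than $\rho(B)$, and $\rho(B)$ strictly increases when one increases an entry of $B$; together with the elementary monotonicity $\rho(A)\le\rho(B)$ whenever $0\le A\le B$ entrywise. First I would set $S=V(H)$ and note that, since $H$ is a subdigraph of $D$, every arc of $H$ is an arc of $D$; after indexing rows and columns compatibly this gives $A(H)\le A(D)[S]$ entrywise, where $A(D)[S]$ is the principal submatrix of $A(D)$ on the rows and columns indexed by $S$. Consequently $\rho(H)=\rho(A(H))\le\rho(A(D)[S])$.

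Next I would split according to whether $H$ omits a vertex of $D$. If $S\subsetneq V(D)$, then $A(D)[S]$ is a proper principal submatrix of $A(D)$; since $D$ is strongly connected, $A(D)$ is nonnegative and irreducible, so $\rho(A(D)[S])<\rho(A(D))=\rho(D)$, and combining with the previous inequality yields $\rho(H)<\rho(D)$.

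In the remaining case $S=V(D)$, properness of $H$ forces $E(H)\subsetneq E(D)$, so there is an arc of $D$ that is not an arc of $H$; hence $A(H)\le A(D)$ entrywise with $A(H)\ne A(D)$. Since $H$ is strongly connected, $A(H)$ is nonnegative and irreducible, so increasing the entries of $A(H)$ corresponding to the missing arcs until $A(D)$ is reached strictly increases the spectral radius, giving $\rho(H)=\rho(A(H))<\rho(A(D))=\rho(D)$. This exhausts all cases. There is no genuine difficulty here — the lemma is bookkeeping on top of the cited matrix facts; the only point needing a little care is the case distinction, since the proper-principal-submatrix inequality is what handles a deleted vertex, while the entry-increase inequality (which requires irreducibility of $A(H)$, guaranteed by strong connectedness of $H$) is what handles the case where only arcs are deleted.
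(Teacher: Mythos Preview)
Your argument is correct and is exactly the approach the paper intends: the paper does not give a separate proof but simply records the lemma as an immediate consequence of the two matrix facts recalled just above it, and your write-up is a careful expansion of that deduction. The only addition you make is the explicit case split on whether $V(H)=V(D)$, which is the natural way to organize the use of those two facts.
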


\begin{Lemma}\label{ll2}  $\rho(\theta(0, n-2, 0))$
is strictly decreasing  for $n\ge 4$.
\end{Lemma}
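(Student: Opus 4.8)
The plan is to mimic the proof of Lemma \ref{l5}, working directly with characteristic polynomials. First I would apply Lemma \ref{poly} with $a=c=0$ and $b=n-2$ (so that $n=a+b+c+2$) to obtain $P(\theta(0,n-2,0),x)=x^n-x^{n-2}-1$. Then I fix integers $n_1>n_2\ge 4$ and set $D_i=\theta(0,n_i-2,0)$ for $i=1,2$, aiming to show $\rho(D_1)<\rho(D_2)$.

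Next I would compute the difference of the two polynomials:
\[
P(D_1,x)-P(D_2,x)=\bigl(x^{n_1}-x^{n_1-2}\bigr)-\bigl(x^{n_2}-x^{n_2-2}\bigr)=(x^2-1)\bigl(x^{n_1-2}-x^{n_2-2}\bigr).
\]
For $x>1$ both factors on the right are positive (the second because $n_1-2>n_2-2\ge 2$), so $P(D_1,x)>P(D_2,x)$ for all $x>1$. This applies in particular at and beyond $\rho(D_2)$, provided $\rho(D_2)>1$; I would justify this either by noting $P(D_2,1)=-1<0$ while $P(D_2,x)\to\infty$, or via Lemma \ref{ad}, since $D_2$ properly contains the directed cycle $C_{n_2}$, which has spectral radius $1$.

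Finally I would finish exactly as in Lemma \ref{l5}: since $\rho(D_2)$ is the largest real root of the monic polynomial $P(D_2,\cdot)$ (it is the Perron root of $A(D_2)$), we have $P(D_2,x)\ge 0$ for all $x\ge\rho(D_2)$; combined with the displayed strict inequality this yields $P(D_1,x)>0$ for all $x\ge\rho(D_2)$, and therefore the largest real root $\rho(D_1)$ of $P(D_1,\cdot)$ must satisfy $\rho(D_1)<\rho(D_2)$. Hence $\rho(\theta(0,n-2,0))$ is strictly decreasing for $n\ge 4$. I do not expect any genuine obstacle here — the entire content is the factorization $P(D_1,x)-P(D_2,x)=(x^2-1)(x^{n_1-2}-x^{n_2-2})$ — but I would take care to state explicitly why $\rho(D_2)>1$ and why a monic polynomial is nonnegative to the right of its largest real root, since those are the two facts that convert the pointwise polynomial inequality into the desired comparison of spectral radii.
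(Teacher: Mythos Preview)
Your proposal is correct and follows essentially the same approach as the paper: both compute $P(D_1,x)-P(D_2,x)$ and factor it to show positivity for $x\ge\rho(D_2)>1$, the paper writing the difference as $(x^{n_2}-x^{n_2-2})(x^{n_1-n_2}-1)$, which is algebraically identical to your $(x^2-1)(x^{n_1-2}-x^{n_2-2})$. Your version is slightly more explicit about why $\rho(D_2)>1$ and why the polynomial inequality yields the spectral radius comparison, but the argument is the same.
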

\begin{proof}
Suppose that $n_1>n_2\ge 4$. By Lemma \ref{poly},   $P(\theta(0,
n_1-2,0),x)-P(\theta(0, n_2-2,0),x)=
(x^{n_2}-x^{n_2-2})(x^{n_1-n_2}-1)>0$ for $x\ge \rho(\theta(0,
n_2-2,0))>1$. Thus $\rho(\theta(0, n_1-2,0))<\rho(\theta(0, n_2-2,
0))$.
\end{proof}

\begin{Lemma}\label{d2}\cite{HJ}
For  $n\ge 4$,  $\infty(2,
n-1)$ is the unique digraph in $\mathbb{B}_n$ which achieves the maximum spectral
radius,
 and  $\theta(0,n-2,0)$ is the unique $\theta$-digraph in $\mathbb{B}_n$ which
achieves the maximum spectral radius.
\end{Lemma}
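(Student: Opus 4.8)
The plan is to work entirely with the trinomial characteristic polynomials from Lemma \ref{poly}. For a $\theta$-digraph or an $\infty$-digraph, $P(D,x)$ has the form $x^{n}-x^{i}-x^{j}$ with $0\le i,j\le n-2$, so $P(D,x)=0$ is equivalent to $1=x^{i-n}+x^{j-n}$, whose right-hand side is strictly decreasing for $x>0$; hence $\rho(D)$ is the unique positive root of $P(D,\cdot)$ and $P(D,x)>0$ exactly for $x>\rho(D)$. Therefore, if $D_{1},D_{2}$ are $\theta$- or $\infty$-digraphs of order $n$ with $P(D_{1},x)<P(D_{2},x)$ for all $x>1$, then evaluating at $x=\rho(D_{1})$ (which exceeds $1$ since, by Lemma \ref{ad}, $D_{1}$ properly contains a directed cycle, of spectral radius $1$) gives $P(D_{2},\rho(D_{1}))>0$ and hence $\rho(D_{1})>\rho(D_{2})$. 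Recall also that every digraph in $\mathbb{B}_{n}$ is a $\theta$-digraph or an $\infty$-digraph, so it suffices to maximise over these two families.

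First I would locate the maximiser among $\theta$-digraphs. By Lemma \ref{add} (move the interior vertices of the third basic path onto the second one), $\rho(\theta(a,b,c))\le\rho(\theta(a,b+c,0))$, with equality only if $c=0$; note $\theta(a,b+c,0)\in\mathbb{B}_{n}$ since $a\le b\le b+c$. Thus it remains to compare the digraphs $\theta(a,n-2-a,0)$ for $0\le a\le\lfloor\frac{n-2}{2}\rfloor$. For consecutive values of $a$, Lemma \ref{poly} gives
\[
P(\theta(a,n-2-a,0),x)-P(\theta(a+1,n-3-a,0),x)=(x-1)\bigl(x^{a}-x^{n-3-a}\bigr),
\]
which is negative for $x>1$ because $a<n-3-a$ in the relevant range; hence $\rho(\theta(a,n-2-a,0))$ strictly decreases in $a$ and is maximised uniquely at $a=0$. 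Combining, $\theta(0,n-2,0)$ is the unique $\theta$-digraph in $\mathbb{B}_{n}$ of largest spectral radius.

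The $\infty$-digraphs are handled the same way: for $\infty(k,n+1-k)$ with $2\le k\le\lfloor\frac{n+1}{2}\rfloor$, Lemma \ref{poly} gives
\[
P(\infty(k,n+1-k),x)-P(\infty(k+1,n-k),x)=(x-1)\bigl(x^{k-1}-x^{n-k-1}\bigr)<0\qquad(x>1),
\]
so $\rho(\infty(k,n+1-k))$ strictly decreases in $k$ and is maximised uniquely at $k=2$, i.e.\ at $\infty(2,n-1)$. Finally, by Lemma \ref{poly},
\[
P(\theta(0,n-2,0),x)-P(\infty(2,n-1),x)=\bigl(x^{n}-x^{n-2}-1\bigr)-\bigl(x^{n}-x^{n-2}-x\bigr)=x-1>0\qquad(x>1),
\]
which yields $\rho(\infty(2,n-1))>\rho(\theta(0,n-2,0))$. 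Since $\mathbb{B}_{n}$ is exhausted by the $\theta$- and $\infty$-digraphs, $\infty(2,n-1)$ is the unique digraph in $\mathbb{B}_{n}$ of maximum spectral radius, and the claim about $\theta(0,n-2,0)$ was settled above.

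There is no deep obstacle here; the proof is a chain of trinomial comparisons. The points that need attention are: Lemma \ref{add} cannot decrease the first parameter $a$, so the separate comparison inside the family $\theta(a,n-2-a,0)$ is genuinely required; one must keep the parameter ranges compatible with $a\le b$ and $k\le l$ when chaining the monotonicity steps; and the passage from a polynomial inequality to a spectral-radius inequality relies on the special trinomial shape of these characteristic polynomials, which guarantees that $\rho$ is the unique positive root.
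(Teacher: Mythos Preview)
The paper does not prove this lemma at all; it is quoted from Lin and Shu \cite{HJ} as a preliminary result, so there is no in-paper argument to compare against. Your proof is correct and self-contained from the other preliminaries: the reduction of any $\theta(a,b,c)$ to $\theta(a,b+c,0)$ via Lemma \ref{add} (read in reverse), the trinomial comparison showing $\rho(\theta(a,n-2-a,0))$ is strictly decreasing in $a$, and the analogous monotonicity for $\infty(k,n+1-k)$ (which incidentally reproves Lemma \ref{l8}) are all valid, as is the final one-line comparison $P(\theta(0,n-2,0),x)-P(\infty(2,n-1),x)=x-1$. The only point worth tightening is the phrase ``$P(D,x)>0$ exactly for $x>\rho(D)$'': this holds for $x>0$, which is all you use, but as written it is slightly imprecise for negative $x$.
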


\begin{Lemma}\label{l8}\cite{HJ}
If $k\ge 1$, then $\rho(\infty (k-1,l+1))>\rho(\infty(k,l))$.
\end{Lemma}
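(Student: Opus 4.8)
The plan is to compare $\infty(k-1,l+1)$ and $\infty(k,l)$ through their characteristic polynomials, in the same spirit as the proofs of Lemmas~\ref{addd}, \ref{l5} and \ref{ll2}. Both digraphs have the same order $n=k+l-1=(k-1)+(l+1)-1$ (and I assume, as the statement implicitly requires, that $k$ is large enough that $\infty(k-1,l+1)$ is a genuine $\infty$-digraph, i.e. both its cycles have length at least two). By Lemma~\ref{poly},
$$P(\infty(k,l),x)=x^{n}-x^{k-1}-x^{l-1},\qquad P(\infty(k-1,l+1),x)=x^{n}-x^{k-2}-x^{l}.$$
Write $\rho=\rho(\infty(k,l))$. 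Since the directed cycle $C_{k}$ is a strongly connected proper subdigraph of $\infty(k,l)$, Lemma~\ref{ad} gives $\rho>\rho(C_{k})=1$, and from $P(\infty(k,l),\rho)=0$ we obtain the key identity $\rho^{n}=\rho^{k-1}+\rho^{l-1}$.

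Next I would evaluate the other characteristic polynomial at $\rho$ and simplify with this identity:
$$P(\infty(k-1,l+1),\rho)=\rho^{n}-\rho^{k-2}-\rho^{l}=\rho^{k-1}+\rho^{l-1}-\rho^{k-2}-\rho^{l}=(\rho-1)\bigl(\rho^{k-2}-\rho^{l-1}\bigr).$$
Because $k\le l$ we have $k-2<l-1$, so $\rho>1$ forces $\rho^{k-2}-\rho^{l-1}<0$, while $\rho-1>0$; hence $P(\infty(k-1,l+1),\rho)<0$.

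It remains to convert this sign into the claimed strict inequality, exactly as in the preceding lemmas: $\rho(\infty(k-1,l+1))$ is the largest real root of the monic polynomial $P(\infty(k-1,l+1),x)$ — by the Perron-Frobenius theorem it is a real root that dominates every other real root — so $P(\infty(k-1,l+1),x)>0$ for every $x>\rho(\infty(k-1,l+1))$. Consequently $P(\infty(k-1,l+1),\rho)<0$ is incompatible with $\rho\ge\rho(\infty(k-1,l+1))$, and therefore $\rho(\infty(k-1,l+1))>\rho=\rho(\infty(k,l))$, which is the assertion.

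I do not expect a genuine obstacle here: once the characteristic polynomials are available the argument is a one-line computation plus a sign check. The only points that need a word of care are recording that $\rho>1$ (this is what makes the exponent comparison $\rho^{k-2}<\rho^{l-1}$ valid), identifying the Perron root with the largest real root so that a negative polynomial value at $\rho$ really does place $\rho$ strictly below $\rho(\infty(k-1,l+1))$, and fixing the parameter convention so that $\infty(k-1,l+1)$ is a well-defined $\infty$-digraph.
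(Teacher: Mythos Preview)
The paper does not actually prove this lemma: it is quoted from Lin and Shu \cite{HJ} without argument, so there is no in-paper proof to compare against. Your argument is correct and is precisely in the spirit of the paper's own polynomial-comparison proofs (Lemmas~\ref{addd}, \ref{l5}, \ref{ll2}): compute both characteristic polynomials via Lemma~\ref{poly}, evaluate the second at the Perron root $\rho$ of the first using $\rho^{n}=\rho^{k-1}+\rho^{l-1}$, factor as $(\rho-1)(\rho^{k-2}-\rho^{l-1})$, and read off the sign from $\rho>1$ and $k\le l$. Your caveat that the hypothesis must really be that $k-1\ge 2$ (so that $\infty(k-1,l+1)$ is a genuine $\infty$-digraph) is well placed; the stated bound $k\ge 1$ is too weak as written.
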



\section{Results}

To determine the
unique  digraphs with the second, the third and the fourth minimum
spectral radii respectively among strongly connected digraphs of order $n\ge 4$, we need
first to determine the unique digraphs in $\mathbb{B}_n$ with the second and the third
minimum spectral radii respectively for $n\ge 4$.

\begin{Theorem}\label{th1} For $n\ge 4$,  $\theta(1, 1,
n-4)$ and $\theta(0, 2, n-4)$ are the unique digraphs in $\mathbb{B}_n$ which achieve
the second and the third minimum spectral radii respectively.
\end{Theorem}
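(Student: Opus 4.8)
The plan is to rely on the structural dichotomy for $\mathbb{B}_n$: every digraph in $\mathbb{B}_n$ is either a $\theta$-digraph or a $\infty$-digraph. It therefore suffices to (i) rank the $\theta$-digraphs of order $n$ by spectral radius, and (ii) see where the $\infty$-digraphs fall. Step (ii) needs nothing new: by Lemma~\ref{l2} the $\infty$-digraph $\infty(\lfloor\frac{n+1}{2}\rfloor,\lceil\frac{n+1}{2}\rceil)$ uniquely minimizes $\rho$ over $\infty$-digraphs of order $n$, and by Lemma~\ref{addd} even this minimizer satisfies $\rho(\infty(\lfloor\frac{n+1}{2}\rfloor,\lceil\frac{n+1}{2}\rceil))>\rho(\theta(0,2,n-4))$; hence every $\infty$-digraph of order $n$ has spectral radius strictly larger than $\rho(\theta(0,2,n-4))$.

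For step (i), write a $\theta$-digraph of order $n$ as $\theta(a,b,c)$ with $a\le b$ and $a+b+c=n-2$; necessarily $b\ge1$, since $a=b=0$ would create a multiple arc, so the only parameter pairs with $a+b\le2$ are $(0,1)$, $(1,1)$, $(0,2)$. By Lemma~\ref{l2}, $\theta(0,1,n-3)$ is the unique minimizer. For the other two I would compare characteristic polynomials via Lemma~\ref{poly}: since $P(\theta(0,1,n-3),x)-P(\theta(1,1,n-4),x)=x-1$ and $P(\theta(1,1,n-4),x)-P(\theta(0,2,n-4),x)=(x-1)^2$ are both positive for $x>1$, the Perron--Frobenius argument used in the proof of Lemma~\ref{l5} gives $\rho(\theta(0,1,n-3))<\rho(\theta(1,1,n-4))<\rho(\theta(0,2,n-4))$. (The first inequality also follows at once from Lemma~\ref{add}.)

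It then remains to show that every $\theta$-digraph of order $n$ other than these three has spectral radius strictly greater than $\rho(\theta(0,2,n-4))$. Such a digraph has $a+b\ge3$, and I would apply Lemma~\ref{add} repeatedly to transfer weight to the third parameter: first reduce $a$ down to $0$ in $a$ steps (each using $\rho(\theta(a,b,c))>\rho(\theta(a-1,b,c+1))$ for $a\ge1$), obtaining $\theta(0,b,c+a)$; then reduce $b$ down to $2$ in $b-2$ steps (each using $\rho(\theta(0,b,c))>\rho(\theta(0,b-1,c+1))$ for $b\ge1$), obtaining $\theta(0,2,a+b+c-2)=\theta(0,2,n-4)$. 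Because $a+(b-2)=a+b-2\ge1$, at least one step occurs and every step is strict, so $\rho(\theta(a,b,c))>\rho(\theta(0,2,n-4))$. Combining this with the ordering of the previous paragraph and with step (ii) yields that $\theta(0,1,n-3)$, $\theta(1,1,n-4)$, $\theta(0,2,n-4)$ are the unique digraphs in $\mathbb{B}_n$ achieving the minimum, second minimum, and third minimum spectral radius.

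I do not expect a genuine obstacle: the dichotomy for $\mathbb{B}_n$, the monotonicity Lemma~\ref{add}, and the cross-comparison Lemma~\ref{addd} do the essential work, and the remaining steps are short polynomial comparisons and bookkeeping. The one point to watch is that the reduction in the third paragraph must be pushed all the way to $\theta(0,2,n-4)$ rather than stopped at $\theta(1,1,n-4)$ --- the latter would give only $\rho(\theta(a,b,c))>\rho(\theta(1,1,n-4))$, which is too weak to place $\theta(0,2,n-4)$ third. One should also record the base case $n=4$ directly: then $\mathbb{B}_4=\{\theta(0,1,1),\theta(1,1,0),\theta(0,2,0),\infty(2,3)\}$, so the third-paragraph step is vacuous there.
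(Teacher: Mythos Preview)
Your proposal is correct and follows essentially the same approach as the paper: the dichotomy into $\theta$- and $\infty$-digraphs, the use of Lemmas~\ref{l2} and~\ref{addd} to dispose of all $\infty$-digraphs, the repeated application of Lemma~\ref{add} to push any $\theta(a,b,c)$ with $(a,b)\notin\{(0,1),(1,1),(0,2)\}$ strictly above $\theta(0,2,n-4)$, and the polynomial comparison $(x-1)^2$ via Lemma~\ref{poly} to separate $\theta(1,1,n-4)$ from $\theta(0,2,n-4)$. The paper's write-up is terser (it invokes Lemma~\ref{add} without spelling out the reduction chain) and omits your separate treatment of $n=4$, which is indeed unnecessary since the general argument already covers it.
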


\begin{proof} Let $D\in \mathbb{B}_n$ with $D\neq \theta(0, 1, n-3)$. Then $D$ is a $\theta$-digraph
 or a $\infty $-digraph.
Suppose that $D$ is a $\theta$-digraph and  $D\neq \theta(1, 1, n-4)$. By Lemma \ref{add}, we have $\rho(D)\ge
\rho(\theta(0, 2, n-4))$ with equality only if $D=\theta(0, 2,
n-4)$. By Lemma \ref{poly},  $P(\theta(1, 1, n-4),x)-P(\theta(0, 2,
n-4),x)=-2x+x^2+1=(x-1)^2>0$
for $x\ge \rho(\theta(0, 2, n-4))>1$. 
 Thus $\rho(D)\ge \rho(\theta(0, 2, n-4))>\rho(\theta(1, 1, n-4))$.
If $D$ is a $\infty $-digraph,  then by Lemmas \ref{l2} and
\ref{addd},
\[
\rho(D)\ge \rho\left(\infty\left(\left\lfloor
\frac{n+1}{2}\right\rfloor,
\left\lceil\frac{n+1}{2}\right\rceil\right)\right)>\rho(\theta(0, 2,
n-4)).
\]
Now the
result follows from the first part of Lemma \ref{l2}.
\end{proof}


\begin{Theorem}\label{th3} Let $D$ be a strongly connected digraph of order $n\ge 4$ that is neither a bicyclic digraph nor  $C_n$.
Then $\rho(D)>\rho(\theta(0,2,n-4))$.
\end{Theorem}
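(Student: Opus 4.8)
The plan is to produce inside $D$ a strongly connected \emph{proper} subdigraph $H$ with $\rho(H)\ge\rho(\theta(0,2,n-4))$; Lemma~\ref{ad} then gives $\rho(D)>\rho(H)\ge\rho(\theta(0,2,n-4))$. First note that, since $D$ is strongly connected with $D\ne C_n$, one has $|E(D)|\ge n+1$, and since $D$ is not bicyclic, $|E(D)|\ge n+2$. A vertex of out-degree at least $2$ lies on two distinct directed cycles, and inside the union of those cycles a strongly connected subdigraph of smallest order that is not itself a single cycle has exactly one more arc than vertices, hence is a $\theta$- or an $\infty$-digraph; thus $D$ contains a strongly connected bicyclic subdigraph $B$, and since $|E(B)|=|V(B)|+1\le n+1<|E(D)|$ it is a proper subdigraph of $D$.

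Next I would dispose of most shapes of $B$. If $B=\theta(a,b,c)$ with $(a,b)\notin\{(0,1),(1,1)\}$, then (since $a\le b$, necessarily $a=0,b\ge2$ or $a\ge1,b\ge2$) repeatedly applying Lemma~\ref{add} — lowering $a$ to $0$ and then $b$ to $2$, each step strictly decreasing $\rho$ at fixed order $m=a+b+c+2\le n$ — yields $\rho(B)\ge\rho(\theta(0,2,m-4))\ge\rho(\theta(0,2,n-4))$ by Lemma~\ref{l5}. If $B=\infty(k,l)$ with order $m=k+l-1\ge4$, then Lemmas~\ref{l2}, \ref{addd}, \ref{l5} give $\rho(B)\ge\rho\!\bigl(\infty(\lfloor\tfrac{m+1}2\rfloor,\lceil\tfrac{m+1}2\rceil)\bigr)>\rho(\theta(0,2,m-4))\ge\rho(\theta(0,2,n-4))$; and the only $\infty$-digraph of order $3$ is $\infty(2,2)$, where Lemma~\ref{poly} gives $\rho(\infty(2,2))=\sqrt2$, while $\rho(\theta(0,2,n-4))^{n}=\rho(\theta(0,2,n-4))^{2}+1<2\rho(\theta(0,2,n-4))^{2}$ forces $\rho(\theta(0,2,n-4))<2^{1/(n-2)}\le\sqrt2$ for $n\ge4$. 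In all of these cases $H=B$ works.

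There remains the case in which \emph{every} strongly connected bicyclic subdigraph of $D$ is $\theta(0,1,c)$ or $\theta(1,1,c)$; in particular $D$ has no $\infty$-subdigraph. Here I would establish two facts about $D$ itself. (i) $D$ has at least three directed cycles: an ear decomposition of $D$ has at least $|E(D)|-n\ge2$ ears beyond the initial cycle, and adjoining an ear to a strongly connected digraph always creates at least one directed cycle through a new arc, so the cycle count strictly increases with each ear. (ii) $D$ has no two vertex-disjoint directed cycles: if $C_1,C_2$ were disjoint, choose shortest directed paths $Q_1$ from $V(C_1)$ to $V(C_2)$ (say $a_1\!\leadsto\!b_1$) and $Q_2$ from $V(C_2)$ to $V(C_1)$ (say $b_2\!\leadsto\!a_2$), internally disjoint from $C_1\cup C_2$ by minimality; if $a_1=a_2$, or $b_1=b_2$, or $Q_1$ and $Q_2$ share a vertex, one reads off two cycles of $D$ meeting in exactly one vertex, an $\infty$-subdigraph, contradicting the hypothesis; otherwise, with $R'$ the sub-path of $C_1$ from $a_1$ to $a_2$, $R$ the sub-path from $a_2$ to $a_1$, and $S$ the sub-path of $C_2$ from $b_1$ to $b_2$, the two internally disjoint directed paths $R'$ and $Q_1SQ_2$ from $a_1$ to $a_2$ together with $R$ form a $\theta$-subdigraph of $D$ in which $Q_1SQ_2$ has length $|Q_1|+|S|+|Q_2|\ge3$, so it is a $\theta(a,b,c)$ with $\max(a,b)\ge2$, again contradicting the hypothesis. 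Granting (i) and (ii): by (ii) the linear subdigraphs of $D$ are exactly its directed cycles, so by the coefficients theorem (as used to obtain Lemma~\ref{poly}) $P(D,x)=x^{n}-\sum_{C}x^{\,n-|C|}$; writing $\rho_0=\rho(\theta(0,2,n-4))$, the estimate above gives $\rho_0<\sqrt2$, hence $\rho_0^{2}+1<3$, while by (i) and $\rho_0>1$ we get $\sum_{C}\rho_0^{\,n-|C|}\ge3$, so
\[
P(D,\rho_0)=\rho_0^{\,n}-\sum_{C}\rho_0^{\,n-|C|}\le\rho_0^{\,n}-3<\rho_0^{\,n}-(\rho_0^{2}+1)=P(\theta(0,2,n-4),\rho_0)=0 .
\]
Since $\rho(D)$ is the largest real root of $P(D,x)$ and $P(D,x)\to+\infty$, this gives $\rho(D)>\rho_0=\rho(\theta(0,2,n-4))$, as required.

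The main obstacle is step (ii): making fully precise that two vertex-disjoint cycles, joined through $D$ by the paths $Q_1$ and $Q_2$, must — once the degenerate overlaps ($a_1=a_2$, $b_1=b_2$, or $Q_1$ meeting $Q_2$) are accounted for — yield either an $\infty$-subdigraph or a $\theta$-subdigraph with a basic path of length at least $3$. Enumerating the ways the two connecting paths can overlap $C_1\cup C_2$ and each other (and, where needed, passing to innermost sub-paths so that the extracted cycles meet in a single vertex) is the only genuinely delicate piece of bookkeeping; everything else uses only the lemmas already available.
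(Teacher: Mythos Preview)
Your overall strategy is sound and genuinely different from the paper's. Both proofs first locate a proper bicyclic subdigraph and dispose of the easy shapes using Lemmas~\ref{add}, \ref{l2}, \ref{addd}, \ref{l5}. The divergence is in the hard residual case where the only bicyclic subdigraphs one can find are $\theta(0,1,\cdot)$ or $\theta(1,1,\cdot)$. The paper pins $D$ down completely: starting from a copy of $\theta(0,1,n-3)$ on all $n$ vertices, it runs through a list (i)--(viii) of forbidden extra arcs and concludes that $D$ is isomorphic to a single explicit digraph $D'$ with $P(D',x)=x^n-2x-1$, which is then compared directly to $P(\theta(0,2,n-4),x)$. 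Your idea is more structural: show (ii) that $D$ has no two vertex-disjoint directed cycles, so by the coefficients theorem $P(D,x)=x^n-\sum_C x^{\,n-|C|}$; together with (i) (at least three cycles) and the bound $\rho_0<\sqrt2$ this forces $P(D,\rho_0)<0$. This is an elegant shortcut that avoids the paper's arc-by-arc enumeration, and everything you do \emph{after} (i) and (ii) is correct.

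The genuine gap is exactly where you flag it: step~(ii) is asserted but not proved. Your sketch handles cleanly the cases where the two connecting paths $Q_1,Q_2$ are internally disjoint (then either $a_1=a_2$ or $b_1=b_2$ produces an $\infty$-digraph, or else the concatenation $Q_1\cdot S\cdot Q_2$ has length $\ge 3$ and one reads off a $\theta$ with $\max(a,b)\ge 2$). But the case where $Q_1$ and $Q_2$ share an internal vertex is not disposed of; passing to ``innermost sub-paths'' as you suggest leads to a recursive situation (a new cycle $C''$ disjoint from $C_1$, with new connecting paths that may again overlap), and one must argue that this recursion terminates in one of the good configurations. This can be done --- for instance by choosing $Q_2$ to be a shortest path from $V(C_2)$ to $V(C_1\cup Q_1)$ rather than just to $V(C_1)$, which forces $Q_2$ to be internally disjoint from $Q_1$ from the outset and essentially eliminates the bad case --- but as written the argument is incomplete. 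Once you make that choice of $Q_2$ precise and check the few endpoint coincidences, (ii) goes through and your proof is complete; in length it ends up comparable to the paper's Case~3, but it is arguably more conceptual.
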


\begin{proof} Let $C$ be a shortest directed cycle in $G$.
Obviously, $V(C)\ne V(D)$. There is a vertex $u\in V(D)\setminus
V(C)$ such that there is an arc from $u$ to some vertex, say $v$, on
$C$. Also, there is a directed path from some vertex on $C$ to $u$.
Let $w$ be a vertex on $C$ such that the distance from $w$ to $u$ in $D$ is as small as
possible. Let $P$ be such a directed path. Then $C$ and $P$ have
exactly one common vertex $w$. If $w\ne v$, then $D$ has a
proper $\theta$-subdigraph, and if $w=v$, then $D$ has  a
proper $\infty$-subdigraph.

If $D$ has a proper $\infty$-subdigraph, say $\infty(k, l)$ with
$k+l=n_1+1$ and $n_1\leq n$, then by Lemma \ref{ad}, the
second part of Lemma \ref{l2}, and Lemmas \ref{addd} and \ref{l5},
we have
\begin{eqnarray*}\rho(D)&>& \rho(\infty(k, l))\\
&\geq & \rho\left(\infty\left(\left\lfloor
\frac{n_1+1}{2}\right\rfloor,
\left\lceil\frac{n_1+1}{2}\right\rceil\right)\right)\\
&>& \rho(\theta(0,2,
n_1-4))\\
&\geq & \rho(\theta(0,2, n-4)). \end{eqnarray*}

Suppose that $D$ has a proper $\theta$-subdigraph, say
$\theta(a,b,c)$ with $a+b+c=n_2-2$ and $n_2\le n$.

\noindent {\bf Case 1.} $n_2\leq n-1$. By Lemma \ref{ad}
and the first part of Lemma \ref{l2}, we have
$$\rho(D)> \rho(\theta(a,b,c))\geq \rho(\theta(0,1, n_2-3)).$$
By Lemma \ref{poly}, $P(\theta(0, 2, n-4),x)-P(\theta(0,1,
n_2-3),x)=x^n-x^{n_2}-x^2+x=x^{n_2}(x^{n-n_2}-1)-x(x-1)\geq
x^{n_2}(x-1)-x(x-1)=(x^{n_2}-x)(x-1)>0$ for $x\ge \rho(\theta(0,1,
n_2-3))>1$. Thus $\rho(\theta(0,1, n_2-3))>\rho(\theta(0, 2, n-4))$.
Hence $\rho(D)>\rho(\theta(0, 2, n-4))$.

\noindent {\bf Case 2.} $n_2=n$ and $\theta(a,b,c)\neq \theta (0,1,
n-3)$ and
  $\theta(1,1,n-4)$. By Lemma \ref{ad}, the first part of Lemma \ref{l2}, and
Theorem \ref{th1},
$$\rho(D)>\rho(\theta(a, b, c))\geq \rho(\theta (0,2, n-4)).$$

\noindent {\bf Case 3.} $n_2=n$ and the $\theta$-subdigraph of $D$
can only be  $\theta (0,1, n-3)$ or $\theta(1,1,n-4)$. Suppose
without loss of generality that $D$ has a $\theta$-subdigraph
$\theta (0,1, n-3)$ (the proof is similar if $D$ has a
$\theta$-subdigraph $\theta(1,1,n-4)$). Let $vw$, $vu_1w$ and
$wu_1'u_2'\dots u_{n-3}'v$ be the basic directed paths of the
subdigraph  $\theta (0,1, n-3)$. We consider the possible arc(s) in
 $D$ (except the arcs in $\theta (0,1, n-3)$) as follows.

(i) $wv\not\not\in E(D)$; Otherwise,  $D$ has a $\theta$-subdigraph
$\theta(0,n-3,0)$, a contradiction.

(ii) $u_1v\not\not\in E(D)$ and $wu_1\not\not\in E(D)$; Otherwise,
$D$ has a $\theta$-subdigraph $\theta(0,n-2,0)$, a contradiction.

%

(iii) $u_1u_k'\not\in E(D)$ and  $u_{n-k-2}'u_1\not\in E(D)$ for
$2\le k\le n-3$; Otherwise, $D$ has a $\theta$-subdigraph $\theta(0,
k, n-k-2)$, a contradiction.

(iv) $vu_k'\not\in E(D)$ and $u_{n-k-2}'w \not\in E(G)$ for $1\leq
k\leq n-3$; Otherwise, $D$ has a $\theta$-subdigraph $\theta(0, k+1,
n-k-3)$, a contradiction.

(v) $u_k'v\not\in E(D)$  and $wu_{n-k-2}'\not\in E(D)$ for $1\leq
k\leq n-4$; Otherwise, $D$ has a $\theta$-subdigraph $\theta(0, 1,
k)$, a contradiction.



(vi) $u_l'u_k'\not\in E(D)$ for $1\leq k< l\leq n-3$; Otherwise, $D$
has a $\theta$-subdigraph $\theta(0, n-l+k-2, l-k-1)$, a contradiction.

(vii) $u_k'u_l'\not\in E(D)$ for $1\leq k< l-1\leq n-4$; Otherwise,
$D$ has a $\theta$-subdigraph $\theta(0, 1,n-2-(l-k))$, a contradiction.

(viii) $\{u_1u_1', u_{n-3}'u_1\}\not\subseteq E(D)$; Otherwise, $D$ has
a $\theta$-subdigraph $\theta(0,1, n-4)$, a contradiction.

From (i)--(viii), we find that besides these arcs in $\theta (0,1,
n-3)$, $D$ contains one additional arc $u_1u_1'$ or $u_{n-3}'u_1$.
Thus $D$ is isomorphic to the digraph $D'$ obtained from
$\theta(0,1,n-3)$ by adding the arc $u_1u_1'$. 
Besides the empty union and $C_n$, $\mathcal{C}(D')$ contains  two directed cycles on $n-1$ vertices. Thus
$P(D,x)=P(D',x)=x^n-2x-1$.
Obviously, $P(D,1)<0$, $P(D,2)>0$, and $P(D,x)$ is strictly
 increasing for $x\ge 1$. Thus $1<\rho(D)<2$. Similarly, $1<\rho(\theta(0,2,n-4))<2$ by Lemma \ref{poly}.
Note that $P(\theta(0,2,n-4),x)-P(D,x)=-x^2+2x>0$ for $1<x<2$. Thus
$\rho(D)>\rho(\theta(0,2,n-4))$.
\end{proof}

From Lemma \ref{l2} and Theorems \ref{th1} and \ref{th3}, we
have the following theorem.

\begin{Theorem} Among the strongly connected digraphs of order $n\ge 4$,  $\theta(0, 1, n-3)$, $\theta(1, 1, n-4)$
and $\theta(0, 2, n-4)$ are the unique digraphs that achieve the second, the third and the
fourth minimum spectral radii respectively.
\end{Theorem}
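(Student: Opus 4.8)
The plan is to obtain this statement as a short corollary of Lemma~\ref{l2} and Theorems~\ref{th1} and~\ref{th3}, using also the fact recalled in the introduction that $C_n$ is the unique strongly connected digraph of order $n$ with smallest spectral radius. The first step is to pin down the chain of strict inequalities
\[
\rho(C_n)<\rho(\theta(0,1,n-3))<\rho(\theta(1,1,n-4))<\rho(\theta(0,2,n-4)).
\]
The leftmost inequality is immediate because $C_n$ is the \emph{unique} minimizer and $\theta(0,1,n-3)\neq C_n$. The middle inequality follows from the second part of Lemma~\ref{add} with $(a,b,c)=(1,1,n-4)$, giving $\rho(\theta(1,1,n-4))>\rho(\theta(0,1,n-3))$; alternatively it is forced by Lemma~\ref{l2} and Theorem~\ref{th1}, since a digraph realizing the \emph{second} minimum in $\mathbb{B}_n$ must strictly exceed the unique first minimum. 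The rightmost inequality is part of Theorem~\ref{th1}; if desired it can be re-derived from Lemma~\ref{poly} via $P(\theta(1,1,n-4),x)-P(\theta(0,2,n-4),x)=(x-1)^2>0$ for $x>1$.

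The second step is to show that every strongly connected digraph $D$ of order $n\ge 4$ other than these four satisfies $\rho(D)>\rho(\theta(0,2,n-4))$, by an exhaustive case split. If $D$ is bicyclic, then $D\in\mathbb{B}_n\setminus\{\theta(0,1,n-3),\theta(1,1,n-4),\theta(0,2,n-4)\}$, and since by Lemma~\ref{l2} and Theorem~\ref{th1} these three digraphs are exactly those realizing the first three minimum spectral radii within $\mathbb{B}_n$, we get $\rho(D)>\rho(\theta(0,2,n-4))$. If $D$ is not bicyclic, then either $D=C_n$, which is already in the list, or $D$ is neither bicyclic nor $C_n$, and then Theorem~\ref{th3} gives $\rho(D)>\rho(\theta(0,2,n-4))$ directly.

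Combining the displayed chain with this case analysis shows that $C_n$, $\theta(0,1,n-3)$, $\theta(1,1,n-4)$, $\theta(0,2,n-4)$ achieve the first four minimum spectral radii in exactly this order, and uniqueness at each level is inherited from the uniqueness clauses of Lemma~\ref{l2} and Theorems~\ref{th1} and~\ref{th3}, the strictness of every inequality ruling out ties. I do not expect a genuine obstacle here: all the analytic content already lives in the preliminary lemmas and in Theorems~\ref{th1} and~\ref{th3}. The only points needing a little care are making the case split genuinely exhaustive over all strongly connected digraphs of order $n$, and checking that the four extremal digraphs are pairwise non-isomorphic so that ``second/third/fourth minimum'' are well separated --- both of which are routine, the latter from arc counts and the $\theta$-parameters.
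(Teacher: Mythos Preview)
Your proposal is correct and follows exactly the same approach as the paper, which simply states that the theorem follows from Lemma~\ref{l2} and Theorems~\ref{th1} and~\ref{th3}. You have merely spelled out in more detail the chain of inequalities and the exhaustive case split that the paper leaves implicit.
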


Thus we answer Problem \ref{p1} affirmatively.

Finally, we determine the unique digraphs in $\mathbb{B}_n$ with the second maximum spectral
radius for $n\ge 4$.


\begin{Theorem}\label{th2} For $n\ge 5$,   $\infty(3,
n-2)$ for $5\leq n\leq 7$, and  $\theta(0, n-2, 0)$ for $n=4$ and $n\geq 8$
are the unique digraphs in $\mathbb{B}_n$ which achieve the second maximum spectral
radius.
\end{Theorem}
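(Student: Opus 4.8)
The plan is to split the analysis of $\mathbb{B}_n$ into the $\theta$-digraphs and the $\infty$-digraphs, use Lemmas \ref{d2}, \ref{l8}, \ref{ll2} and \ref{poly} to reduce each family to a single champion, and then compare the two champions directly via their characteristic polynomials. First, among all $\theta$-digraphs in $\mathbb{B}_n$, Lemma \ref{d2} tells us $\theta(0,n-2,0)$ is the unique maximiser, so the candidate for second place within the $\theta$-family is the runner-up among $\theta$-digraphs; by Lemma \ref{add} the spectral radius of $\theta(a,b,c)$ increases as mass is shifted toward making two of the parameters as unbalanced/large as possible, and one checks that the second-largest is $\theta(1,n-3,0)$ (or whichever $\theta$-digraph is forced by Lemma \ref{add} to sit just below $\theta(0,n-2,0)$); I would pin this down by the same polynomial-subtraction trick used in Theorem \ref{th1}, comparing $P(\theta(1,n-3,0),x)$ with $P(\theta(a,b,c),x)$ for the remaining $\theta$-digraphs. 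Second, among $\infty$-digraphs in $\mathbb{B}_n$, Lemma \ref{d2} says $\infty(2,n-1)$ is the unique maximiser and Lemma \ref{l8} says $\rho(\infty(k,l))$ strictly increases as $k$ decreases (with $k+l=n+1$ fixed), so the runner-up among $\infty$-digraphs is $\infty(3,n-2)$, provided $n-2\ge 3$, i.e. $n\ge 5$.

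Next I would compare the overall maximiser $\infty(2,n-1)$ against both runners-up to confirm it is strictly largest, and then compare $\theta(1,n-3,0)$ against $\infty(3,n-2)$ to decide which is actually second. Using Lemma \ref{poly}, $P(\infty(3,n-2),x)=x^n-x^2-x^{n-3}$ and $P(\theta(1,n-3,0),x)=x^n-x^{n-3}-x$ (matching parameters: for $\theta(a,b,c)$ with $a+b+c=n-2$ and here $a=1,b=n-3,c=0$, we get $x^n-x^1-x^{n-3}$). Hence
\[
P(\theta(1,n-3,0),x)-P(\infty(3,n-2),x)=x^2-x=x(x-1)>0
\]
for $x>1$, which gives $\rho(\infty(3,n-2))>\rho(\theta(1,n-3,0))$ for \emph{all} $n\ge 5$. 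So the $\infty$-runner-up always beats the $\theta$-runner-up on the level of these two explicit digraphs; the subtlety — and this is where the $n$-dependence in the statement comes from — is that for large $n$ the value $\rho(\infty(3,n-2))$ drops so low (since by Lemma \ref{l8}/monotonicity $\infty(3,n-2)$ is close to $C_n$-like behaviour) that it falls \emph{below} $\rho(\theta(0,n-2,0))$, which by Lemma \ref{ll2} is decreasing but stays comparatively large. The real work, therefore, is to show that $\rho(\theta(0,n-2,0))>\rho(\infty(3,n-2))$ precisely for $n=4$ and $n\ge 8$, while the reverse inequality holds for $n=5,6,7$.

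To carry this out I would compare $P(\theta(0,n-2,0),x)=x^n-x^{n-2}-1$ with $P(\infty(3,n-2),x)=x^n-x^{n-3}-x^2$, getting
\[
P(\theta(0,n-2,0),x)-P(\infty(3,n-2),x)=x^{n-3}+x^2-x^{n-2}-1=(x^{n-3}-1)-x^2(x^{n-4}-1),
\]
and then determine the sign of this expression at the relevant spectral radius. For $n\le 7$ the cases $n=5,6,7$ are finite and handled by an explicit numerical/root comparison of the two cubics/quartics/quintics, showing $\infty(3,n-2)$ wins; for $n\ge 8$ I would show that at $x=\rho(\infty(3,n-2))$ the difference above is positive (equivalently $P(\theta(0,n-2,0),\rho(\infty(3,n-2)))>0$, forcing $\rho(\theta(0,n-2,0))>\rho(\infty(3,n-2))$ since $P(\theta(0,n-2,0),\cdot)$ is eventually increasing past $1$), using the crude bound on $\rho(\infty(3,n-2))$ that it is close to $1$ — more precisely, since $\rho(\infty(3,n-2))^{n}=\rho^{n-3}+\rho^2$ one gets $\rho^3<1+\rho^{-(n-5)}$, hence $\rho\to 1$, and plugging a suitable upper estimate $\rho\le r_n$ with $r_n\downarrow 1$ into $x^{n-3}+x^2-x^{n-2}-1$ makes it positive once $n\ge 8$. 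The main obstacle is exactly this last estimate: getting a clean enough upper bound on $\rho(\infty(3,n-2))$ to push the inequality through uniformly for all $n\ge 8$ while still having $n=5,6,7$ genuinely fall on the other side, so the threshold $8$ must be shown to be sharp rather than merely sufficient. Once both directions are established, combining with Lemmas \ref{d2}, \ref{l8}, \ref{ll2} and the uniqueness statements in Lemma \ref{l2}/\ref{d2} yields the stated characterisation, including the $n=4$ case where $\mathbb{B}_4$ is small enough to check by hand.
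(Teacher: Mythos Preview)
Your reduction to comparing $\rho(\theta(0,n-2,0))$ with $\rho(\infty(3,n-2))$ is correct and matches the paper, but two things go wrong around it.

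First, the detour through a ``$\theta$-runner-up'' $\theta(1,n-3,0)$ is unnecessary and stems from a small conceptual slip: since the overall maximiser $\infty(2,n-1)$ is an $\infty$-digraph, the candidate from the $\theta$-family for second place overall is the \emph{champion} $\theta(0,n-2,0)$, not its runner-up. The paper simply observes that the second maximum over all of $\mathbb{B}_n$ equals $\max\{\rho(\theta(0,n-2,0)),\rho(\infty(3,n-2))\}$ and moves on; your comparison of $\theta(1,n-3,0)$ with $\infty(3,n-2)$ is correct but irrelevant.

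Second, and more seriously, your implication in the $n\ge 8$ step is backwards. You argue that if the difference $P(\theta(0,n-2,0),x)-P(\infty(3,n-2),x)$ is positive at $x=\rho':=\rho(\infty(3,n-2))$, then $\rho(\theta(0,n-2,0))>\rho'$. But this is exactly the opposite of what you (correctly) used two paragraphs earlier: $P(\theta(0,n-2,0),\rho')>0$ together with $P(\theta(0,n-2,0),1)=-1<0$ and monotonicity for $x>1$ forces $\rho(\theta(0,n-2,0))<\rho'$, not $>$. To get the desired inequality by evaluating at $\rho'$ you would need the difference to be \emph{negative} there. The paper avoids this trap by evaluating in the other direction: it sets $\rho=\rho(\theta(0,n-2,0))$, uses Lemma~\ref{ll2} to get the uniform bound $\rho\le \rho(\theta(0,6,0))<1.175$ for $n\ge 8$, substitutes $\rho^{n-2}=1/(\rho^2-1)$, and reduces $P(\infty(3,n-2),\rho)>0$ to positivity of the degree-four polynomial $h(x)=1+x+x^2-x^3-x^4$ on $(1,1.175)$, which is independent of $n$ and checked by a single evaluation $h(1.175)>0$. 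This both fixes the sign and removes the ``main obstacle'' you flagged, since the needed upper bound comes for free from Lemma~\ref{ll2} rather than from an ad hoc estimate on $\rho(\infty(3,n-2))$.
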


\begin{proof} Obviously, $\mathbb{B}_4=\{\infty(2,3),
\theta(0,2,0), \theta(1,1,0), \theta(0,1,1)\}$. By Lemmas
\ref{l2} and \ref{d2}, and  Theorem \ref{th1}, we have
$\rho(\infty(2,3))>\rho(\theta(0,2,0))>\rho(\theta(1,1,0))>\rho(\theta(0,1,1))$.
Thus the result for $n=4$ follows.

Suppose that $n\ge 5$.
Let $D\in \mathbb{B}_n$ and $D\neq \infty(3,
n-2)$, $\theta(0, n-2, 0)$.  If $D$ is a $\theta$-digraph, then  by
the second part of  Lemma \ref{d2}, $\rho(D)<\rho(\theta(0, n-2,
0))$. If $D$ is a $\infty $-digraph and $D\neq \infty(2,n-1)$, then
by Lemma \ref{l8}, $\rho(D)<\rho(\infty(3, n-2))$. Now  by the first
part of  Lemma \ref{d2},    the second maximum spectral radius of digraphs in $\mathbb{B}_n$ is
$\max\{\rho(\theta(0, n-2, 0)), \rho(\infty(3, n-2))\}$, which is
only achieved by
 $\theta(0, n-2, 0)$
or  $\infty(3, n-2)$.

If $5\leq n\leq 7$, then by direct calculation using maple, we have
$\rho(\theta(0, n-2, 0))<\rho(\infty(3, n-2))$.

Suppose that $n\ge 8$. Let $\rho=\rho(\theta(0, n-2, 0))$. Obviously, $\rho>1$. By Lemma \ref{ll2} and direct calculation
using maple, we have $\rho\le \rho(\theta
(0,6,0))=1.1748\dots<1.175$.
For $1<x<1.175$, let $h(x)=1+x+x^2-x^3-x^4$. Since
$h'(x)=1+2x-3x^2-4x^3<0$, $h(x)$ is strictly decreasing. 
Thus $h(\rho)> h(1.175)=0.027265>0$.
By Lemma \ref{poly},
 $\rho^{n-2}=\frac{1}{\rho^2-1}$, and thus
\begin{eqnarray*}
P(\infty(3, n-2),\rho)&=&P(\infty(3, n-2),\rho)-P(\theta(0, n-2, 0),\rho)\\&=&\rho^{n-2}+1-\rho^{n-3}-\rho^2
\\&=&(\rho-1)(\rho^{n-3}-\rho-1)
\\&=&(\rho-1)\left(\frac{1}{\rho(\rho^2-1)}-\rho-1\right)
\\&=&\frac{h(\rho)}{\rho(\rho+1)}\\
&>& 0.
\end{eqnarray*}
Obviously,  $P(\infty(3, n-2),1)=-1<0$ and $P(\infty(3, n-2),x)$ is strictly increasing for $x>1$. Thus
$\rho(\infty(3, n-2))<\rho=\rho(\theta(0, n-2, 0))$.
\end{proof}

\vspace{4mm}

\noindent {\bf Acknowledgement.}
This work was supported by the National Natural
Science Foundation of China (No.~11071089) and the Research Fund for the Doctoral Program of Higher Education of China
(No.~20124407110002).


\begin{thebibliography}{9}

\bibitem{R} R. Brualdi, Spectra of digraphs, Linear Algebra Appl. 432 (2010)
2181--2213.

\bibitem{CDS} D.M. Cvetkovi\'c, M. Doob, H. Sachs, Spectra of Graphs, Academic Press, New York 1980.

\bibitem{EF} F. Esser, F. Harary, Digraphs with real and guassian spectra, Discrete Appl. Math. 2 (1979) 113--124.


\bibitem{HJ} H. Lin, J. Shu,
A note on the spectral characterization of strongly connected bicyclic digraphs, Linear Algebra Appl. 436 (2012) 2524--2530.

\bibitem{LS2} H. Lin, J. Shu
Spectral radius of digraphs with given dichromatic number, Linear Algebra Appl. 434 (2011) 2462--2467.



\bibitem{H} H. Minc, Nonnegative Matrices, John Wiley  Sons, New York, 1988.



%
\end{thebibliography}
\end{document}